\newtheorem{thm}{Theorem}[section]
\newtheorem{cor}[thm]{Corollary}
\newtheorem{prop}[thm]{Proposition}
\newtheorem{exa}[thm]{Example}
\newcommand{\no}{\nonumber}
\newcommand{\ri}{\rightarrow}
\newcommand{\al}{\alpha}
\newcommand{\la}{\lambda}
\newcommand{\ph}{\varphi}
\newcommand{\ep}{\epsilon}
\newcommand{\be}{\begin{eqnarray}}
\newcommand{\ee}{\end{eqnarray}}
\numberwithin{equation}{section}
\begin{document}

\title{Fourier transformation and stability of differential equation on $L^1(\Bbb{R})$}

\author{ H. Rezaei$^{\ast}$ and Z. Zafarasa}

\address{Department of Mathematics, College of Sciences, Yasouj University,
Yasouj-75914-74831, Iran}

\email{rezaei@yu.ac.ir}
\email{zzafarasa@gmail.com}

\thanks{ {\bf MSC(2010):} Primary: 44A10, 26D10; Secondary: 39B82, 34A40.
\newline {\bf Keywords:} Fourier transform, Differential equation, Hyers-Ulam stability
\newline * Corresponding Author}

\maketitle

\begin{abstract}
In the present paper by the Fourier transform we show that every linear differential equations of $n$-th order has a solution in $L^1(\Bbb{R})$ which is infinitely differentiable in $\Bbb{R} \setminus \{0\}$.
Moreover the Hyers-Ulam stability of such equations on $L^1(\Bbb{R})$ is investigated.
\end{abstract}
%--------------------------------------------------------------------------------------------------%
\section{Introduction}
One of the main mathematical problems in the study of functional equations
is the Hyers-Ulam stability of the equations. S. M. Ulam was the first one who
investigated this problem by the question: Under what conditions does there exist
an additive mapping near an approximately additive mapping? D.H. Hyers in \cite{He}
gave an answer to the problem of Ulam for additive functions defined on Banach
spaces. Let $X_1, X_2$ be two real Banach spaces and $\ep>0$ is given. Then for every
mapping $f:X_1 \ri X_2$ satisfying
$$
\|f(x + y)-f(x)-f(y)\| < \ep \mbox{ for all } x; y \in X_1;
$$
there exists a unique additive mapping $g:X_1 \ri X_2$ with the property
$$
\|f(x)-g(x)\| < \ep \mbox{ for all } x \in X_1:
$$
After Hyers result, these problems have been extended to other functional equations \cite{Jung}.
The may be most important extension is the Hyers-Ulam stability of the differential equations.

The differential equation $\ph(y,y', \ldots,y^{(n)})=f$ has Hyers-Ulam stability on normed space $X$ if for given $\ep>0$ and a function $y$ such that
$\|\ph(y,y', \ldots,y^{(n)})-f\|<\ep$, there is a solution $y_a \in X$ of the differential equation such that $\|y-y_a\|<K(\ep)$,
and $\lim_{\ep \ri 0} K(\ep)=0$.

R. Ger \cite{AlGe} were the first authors who investigated the Hyers-Ulam stability of a differential equation.
The result of Alsina and Ger was extended to the Hyers-Ulam stability of higher order with constant coefficients in \cite{TTMM}. We mention here only the recent contributions in this subject of some mathematicians such as \cite{CiPo}, \cite{LiSh}, \cite{Ru1}, \cite{Ru2}, \cite{TTMM}, \cite{WZS}.
I.A. Rus has proved some results on the stability of ordinary differential and integral
equations using Gronwall lemma and the technique of weakly Picard operators
\cite{Ru1, Ru2}. Using the method of integral factors, the Hyers-Ulam stability of some ordinary differential equations of first and second order with constant coefficients have been proved in \cite{LiSh, WZS}.

Recall that the Laplace transform is a powerful integral transform used to switch a function from the time domain to the $s$-domain. They maps a function to a new function on the complex plane. The Laplace transform can be used in some cases to solve linear differential equations with given initial conditions.
Applying the Laplace transform method, Rezaei et al. \cite{RJR} investigated the Hyers-Ulam stability of the linear differential equations of functions defined on $(0, +\infty)$.

Fourier transforms can also be applied to the solution of differential equations of functions with domain $(-\infty, +\infty)$.
They can convert a function to a new function on the real line. Since the Laplace transform can not be used for the functions defined on $(-\infty, +\infty)$, in the present paper, we apply the Fourier transforms to show that
every $n$-order linear differential equation with constant coefficients has a solution in $L^1(\mathbb{R})$
which is infinitely differentiable in $\mathbb{R}\setminus \{0\}$.
Moreover it proves the Hyers--Ulam stability of equation on $L^1(\mathbb{R})$.
%%%%%%%%%%%%%%%%%%%%%%%%%%%%%%%%%%%%%%%%%%
\section{Fourier Transform and Inversion Formula}
Throughout this paper $\mathbb{F}$ will denote either the real field, $\mathbb{R}$, or the complex field, $\mathbb{C}$.
Assume that $f:(-\infty, +\infty) \ri \mathbb{F}$ is absolutely integrable. Then the
Fourier transform associated to $f$ is a mapping $\mathcal{F}(f):\mathbb{R} \ri \mathbb{C}$ defined by
\be
\mathcal{F}(f)(w) = \int_{-\infty}^{\infty} e^{-iwt} f(t) dt \no
\ee
Also the inverse Fourier transform associated to a function $f \in L^1(\mathbb{R})$ is defined by
$$
\mathcal{F}^{-1}(f)(t)=\int_{-\infty}^{\infty}e^{iwt}f(w) dw
$$
By the Fourier Inversion Theorem (see Theorem A.14 in \cite{Sh}) if $f \in L^1(\mathbb{R})$ and $f$ and $f'$ are piecewise continuous on $\mathbb{R}$, that is, both are continuous in any finite interval except possibly for a finite number of jump
discontinuities, then at each point $t$ where $f$ is continuous,
$$
\mathcal{F}^{-1}(\mathcal{F}(f))(t)=f(t)
$$
In particular if $f, \mathcal{F}(f) \in L^1(\mathbb{R})$ then the above relation holds almost every where on $\mathbb{R}$
(see Theorem 9.11 in \cite{Rud}).

In the following some required properties of the Fourier transform is presented.
\begin{prop}
Let $f \in L^1(\mathbb{R})$, $F=\mathcal{F}(f)$, and $u$ be the step function defined by $u(t)=1$ for $t \geq 0$ and $u(t)=0$ for $t<0$ then
\\

{\rm (i)}  $\mathcal{F}(e^{-zt}u(t))(w)=\frac{1}{iw+z}$ provided that $Re(z)>0$,
\\

{\rm (ii)} $\mathcal{F}(f(\frac{t-t_0}{k})(w)=ke^{-iwt_0}F(kw)$,
\\

{\rm (iii)}  $\mathcal{F}(f(t)e^{iw_0t})(w)=F(w-w_0)$,
\\

{\rm (iv)} $\mathcal{F}((-it)^n f(t))(w)=F^{(n)}(w)$.
\\

{\rm (v)}  $\mathcal{F}(y^{(n)}(t))(w)=(iw)^n\mathcal{F}(y)(w)$,

\end{prop}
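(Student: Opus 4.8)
The plan is to prove each identity directly from the definition of the Fourier transform, treating parts (i)--(iv) as routine computations and reserving the real content for part (v). For part (i), I would substitute $f(t)=e^{-zt}u(t)$ into the defining integral, use the step function to reduce the domain to $[0,\infty)$, and evaluate $\int_0^\infty e^{-(iw+z)t}\,dt$; the hypothesis $\mathrm{Re}(z)>0$ guarantees the integrand decays so the integral converges to $\frac{1}{iw+z}$. For part (ii) I would apply the change of variables $s=\frac{t-t_0}{k}$ (assuming $k>0$) in the integral $\int_{-\infty}^\infty e^{-iwt}f(\frac{t-t_0}{k})\,dt$, which produces the Jacobian factor $k$ and, after writing $t=ks+t_0$, the phase $e^{-iwt_0}$ together with the rescaled transform $F(kw)$. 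Part (iii) is the dual statement and follows by simply absorbing $e^{iw_0 t}$ into the exponential $e^{-iwt}$, yielding $e^{-i(w-w_0)t}$ and hence the shift $F(w-w_0)$.

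For part (iv) I would differentiate $F(w)=\int_{-\infty}^\infty e^{-iwt}f(t)\,dt$ under the integral sign; each differentiation in $w$ brings down a factor of $-it$, so $n$ differentiations give $F^{(n)}(w)=\int_{-\infty}^\infty (-it)^n e^{-iwt}f(t)\,dt=\mathcal{F}((-it)^n f(t))(w)$. The justification for differentiating under the integral requires that $t^n f(t)$ be integrable, a regularity assumption I would flag but not belabor, since the proposition is stated under the blanket hypothesis $f\in L^1(\mathbb{R})$ and is used later only for suitably nice functions.

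The real obstacle is part (v), the differentiation rule $\mathcal{F}(y^{(n)})(w)=(iw)^n\mathcal{F}(y)(w)$, because naively integrating by parts produces boundary terms that must be shown to vanish. The plan is to handle $n=1$ first: writing $\mathcal{F}(y')(w)=\int_{-\infty}^\infty e^{-iwt}y'(t)\,dt$ and integrating by parts gives $\bigl[e^{-iwt}y(t)\bigr]_{-\infty}^{\infty}+iw\int_{-\infty}^\infty e^{-iwt}y(t)\,dt$. The crux is that the boundary term vanishes, which holds because $y\in L^1(\mathbb{R})$ together with $y'\in L^1(\mathbb{R})$ forces $y(t)\to 0$ as $t\to\pm\infty$; this is where I expect the main care to be needed, since integrability alone does not give a pointwise limit, and one must invoke that $y(t)=y(0)+\int_0^t y'(s)\,ds$ has a limit at infinity whose only $L^1$-compatible value is $0$. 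Once the base case is established, the general statement follows by an immediate induction: applying the $n=1$ rule to $y^{(n-1)}$ gives $\mathcal{F}(y^{(n)})(w)=iw\,\mathcal{F}(y^{(n-1)})(w)$, and the inductive hypothesis $\mathcal{F}(y^{(n-1)})(w)=(iw)^{n-1}\mathcal{F}(y)(w)$ completes the argument, provided $y,y',\dots,y^{(n)}$ all lie in $L^1(\mathbb{R})$ so each boundary term vanishes at every stage.
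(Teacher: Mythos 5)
Your proposal is correct and follows essentially the same route as the paper, which derives (i)--(iii) directly from the definition and simply cites Katznelson (Theorem 1.6) for (iv) and Deitmar (Theorem 3.3.1(f)) for (v); your differentiation-under-the-integral and integration-by-parts arguments are exactly the standard proofs of those cited results. Your explicit flags --- that (ii) as written requires $k>0$, that (iv) needs $t^n f(t)\in L^1(\mathbb{R})$, and that (v) needs $y,y',\dots,y^{(n)}\in L^1(\mathbb{R})$ so the boundary terms vanish --- identify hypotheses the paper leaves implicit, and are worth keeping.
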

\begin{proof}
Part (i)-(iii) is obtained by the definition of Fourier transform. For part (iv) see Theorem 1.6, page 136, in \cite{Ka}.
For part (v) see Theorem 3.3.1, part (f), in \cite{De}.
\end{proof}
Recall that the convolution of two functions $f, g \in L^1(\mathbb{R})$ is defined by
$$
(f \ast g)(t)=\int_{-\infty}^{+\infty}f(t-x)g(x) dx.
$$
Then $f \ast g \in L^1(\mathbb{R})$, $\|f \ast g\|_1 \leq \|f\|_1 \|g\|_1$, and
$$
\mathcal{F}(f \ast g)= \mathcal{F}(f).\mathcal{F}(g).
$$
Moreover we have the following Theorem:
\begin{thm}
Let $f, g \in L^1(\mathbb{R})$ then
\\

{\rm (i)} $\mathcal{F}(f \ast g)= \mathcal{F}(f) \mathcal{F}(g)$,
\\

{\rm (ii)} $\mathcal{F}^{-1}(fg)= \mathcal{F}^{-1}(f)\ast \mathcal{F}^{-1}(g)$,
\\

{\rm (iii)} If either $f$ or $g$ is differentiable, then $f \ast g$ is differentiable.
If $f'$ exists and is a continuous, then $(f\ast g)'=f' \ast g$.
\\
\end{thm}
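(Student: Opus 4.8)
The plan is to establish the three parts in sequence, obtaining (i) and (ii) from Fubini's theorem combined with a change of variables, and (iii) from differentiation under the integral sign.

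For part (i), I would start from the definition, writing
\[
\mathcal{F}(f \ast g)(w) = \int_{-\infty}^{\infty} e^{-iwt} \left( \int_{-\infty}^{\infty} f(t-x) g(x)\, dx \right) dt .
\]
The key step is to interchange the order of integration. This is legitimate by Fubini's theorem because the double integral is absolutely convergent: the integral of $|e^{-iwt}|\,|f(t-x)|\,|g(x)| = |f(t-x)|\,|g(x)|$ over $\mathbb{R}^2$ equals $\|f\|_1 \|g\|_1 < \infty$ by the translation invariance of Lebesgue measure. After interchanging, I substitute $s = t - x$ in the inner integral and use $e^{-iwt} = e^{-iws} e^{-iwx}$, which factors the integrand and yields $\mathcal{F}(f)(w)\,\mathcal{F}(g)(w)$. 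This part is routine once Fubini is invoked.

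For part (ii), the cleanest route is to deduce it from (i) rather than to repeat the computation. Setting $F = \mathcal{F}(f)$ and $G = \mathcal{F}(g)$ and applying $\mathcal{F}^{-1}$ to both sides of (i) gives $f \ast g = \mathcal{F}^{-1}(FG)$; rewriting $f = \mathcal{F}^{-1}(F)$ and $g = \mathcal{F}^{-1}(G)$ through the inversion formula recorded in Section 2 then produces exactly $\mathcal{F}^{-1}(F)\ast \mathcal{F}^{-1}(G) = \mathcal{F}^{-1}(FG)$, which is the assertion. Alternatively, I could run the same Fubini argument as in (i) with $e^{iwt}$ in place of $e^{-iwt}$. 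Either way, the point needing care is that the convolution on the right-hand side is meaningful, i.e.\ that the relevant inverse transforms lie in $L^1(\mathbb{R})$ so that the inversion theorem applies; I would state this hypothesis explicitly.

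For part (iii), using commutativity $f \ast g = g \ast f$ I may assume $f$ is the differentiable factor and write the difference quotient
\[
\frac{(f \ast g)(t+h) - (f \ast g)(t)}{h} = \int_{-\infty}^{\infty} \frac{f(t+h-x) - f(t-x)}{h}\, g(x)\, dx .
\]
As $h \to 0$ the integrand converges pointwise to $f'(t-x) g(x)$, and by the mean value theorem the difference quotient equals $f'(\xi)$ for some $\xi$ between $t-x$ and $t+h-x$; hence it is dominated by $\|f'\|_\infty |g(x)|$, an integrable function since $g \in L^1(\mathbb{R})$. The dominated convergence theorem then permits passing the limit inside the integral, giving $(f \ast g)'(t) = (f' \ast g)(t)$. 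The hard part will be this domination step: continuity of $f'$ alone does not bound the difference quotients globally, so I expect to need the additional hypothesis that $f'$ is bounded (or some uniform integrability substitute) in order to justify the interchange of limit and integral; this is the only genuinely delicate point in the theorem.
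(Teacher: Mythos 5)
Your proof is correct and fills in arguments the paper does not actually write out: the printed proof of this theorem consists entirely of citations (Katznelson for (i), ``the definition of $\mathcal{F}^{-1}$ and the Fourier Inversion Theorem'' for (ii), and Conway for (iii)), and the arguments you supply --- Fubini plus a change of variables for (i), deduction of (ii) from (i) via inversion, and dominated convergence on the difference quotient for (iii) --- are exactly the standard proofs behind those citations, so the mathematical route is the same even though you are the one doing the work. Two of your caveats are genuinely substantive and worth keeping. For (ii), the identity as printed for arbitrary $f,g\in L^1(\mathbb{R})$ is not literally meaningful: $fg$ need not be integrable and $\mathcal{F}^{-1}(f)$, $\mathcal{F}^{-1}(g)$ need not lie in $L^1(\mathbb{R})$, so the convolution on the right may be undefined; the version that is actually used later in the paper (Theorem 3.3) is your reformulation $\mathcal{F}^{-1}(\mathcal{F}(u)\mathcal{F}(v))=u\ast v$ for $u,v\in L^1(\mathbb{R})$, and stating the hypothesis explicitly, as you propose, repairs the gap. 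For (iii), you are right that continuity of $f'$ alone does not dominate the difference quotients; boundedness of $f'$ (or compact support, as in the mollifier setting of the cited source) is needed for the mean-value-theorem bound $\left|\frac{f(t+h-x)-f(t-x)}{h}\right|\leq\sup|f'|\cdot 1$ to produce an integrable majorant $\sup|f'|\,|g(x)|$. One small addition for completeness: when $f$ is complex-valued the mean value theorem does not apply directly, so write the difference quotient as $\int_0^1 f'(t-x+sh)\,ds$ instead; the same bound then follows. With these hypotheses made explicit your argument is complete and, unlike the paper's, self-contained.
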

\begin{proof}
For part (i) see Theorems 1.3 and 1.5, page 135 in \cite{Ka}.
Part (ii) follows from the definition of $\mathcal{F}^{-1}$ and the Fourier Inversion Theorem. To see part (iii),
use Proposition 2.1, page 68, in \cite{Co}.
\end{proof}
Note that if $u$ is the step function then
$$
(f \ast u)(t)=(u \ast f)(t)=\int_{-\infty}^{+\infty}u(t-x)f(x) dx=\int_{-\infty}^{t}f(x) dx.
$$
The following corollary is deduced from Theorem 1.5, page 135, in \cite{Ka}:

\begin{cor}
Let $f \in L^1(\mathbb{R})$ then
$$
\mathcal{F}(f \ast u)(w)=\frac{\mathcal{F}(f)(w)}{iw} \quad (w \neq 0)
$$
\end{cor}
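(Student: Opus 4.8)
The plan is to realize the non-integrable step function $u$ as a pointwise limit of genuinely integrable functions for which the convolution theorem applies, and then pass to the limit. Concretely, for each $\epsilon>0$ I set $u_\epsilon(t)=e^{-\epsilon t}u(t)$, which belongs to $L^1(\mathbb{R})$ since it decays exponentially on $[0,\infty)$ and vanishes on $(-\infty,0)$. By Proposition 2.1(i), applied with $z=\epsilon$ (so $\mathrm{Re}(z)>0$), we have $\mathcal{F}(u_\epsilon)(w)=\tfrac{1}{iw+\epsilon}$, and since $f,u_\epsilon\in L^1(\mathbb{R})$ the convolution theorem (Theorem 2.2(i)) gives $\mathcal{F}(f\ast u_\epsilon)(w)=\mathcal{F}(f)(w)\,\tfrac{1}{iw+\epsilon}$. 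For every fixed $w\neq 0$ the right-hand side converges to $\mathcal{F}(f)(w)/(iw)$ as $\epsilon\to 0^{+}$, which is precisely the asserted value, so the whole task reduces to controlling the left-hand side.

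Next I would verify that $f\ast u_\epsilon\to f\ast u$ in a usable sense. Using the integral identity already recorded in the excerpt, $(f\ast u_\epsilon)(t)=\int_0^\infty f(t-x)e^{-\epsilon x}\,dx$ while $(f\ast u)(t)=\int_{-\infty}^{t}f(x)\,dx$, and for each fixed $t$ the integrand is dominated by the map $x\mapsto|f(t-x)|$ on $[0,\infty)$, which is integrable; hence $f\ast u_\epsilon\to f\ast u$ pointwise by dominated convergence. The step I expect to be the main obstacle is upgrading this pointwise convergence to convergence of the \emph{transforms}, i.e.\ justifying the interchange of $\lim_{\epsilon\to 0}$ with $\int_{-\infty}^{\infty}e^{-iwt}(\cdot)\,dt$. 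The difficulty is genuine: $f\ast u$ need not lie in $L^1(\mathbb{R})$, since its value at $+\infty$ is $\int_{\mathbb{R}}f=\mathcal{F}(f)(0)$, which is generally nonzero, so a naive dominated-convergence bound on the $t$-integral is unavailable. This is exactly the analytic reason the conclusion is restricted to $w\neq 0$, and I would resolve it by exploiting the oscillation of $e^{-iwt}$ for $w\neq 0$ (an integration-by-parts estimate), or simply by invoking the integration rule of Theorem 1.5 in \cite{Ka}, as the corollary proposes.

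A shorter alternative route bypasses the limit altogether. Setting $g=f\ast u$, so that $g(t)=\int_{-\infty}^{t}f(x)\,dx$ and hence $g'=f$ by the Fundamental Theorem of Calculus, the differentiation rule of Proposition 2.1(v) yields $\mathcal{F}(f)(w)=\mathcal{F}(g')(w)=iw\,\mathcal{F}(g)(w)$, and dividing by $iw$ gives $\mathcal{F}(f\ast u)(w)=\mathcal{F}(f)(w)/(iw)$ for $w\neq 0$. This is cleaner, but it merely relocates the same issue into the hypotheses of Proposition 2.1(v): the usual integration-by-parts proof of that rule leaves boundary terms at $\pm\infty$ that vanish only when $g$ decays, and $g$ fails to decay at $+\infty$ precisely when $\mathcal{F}(f)(0)\neq 0$, forcing the exclusion of $w=0$ once more. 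I would therefore present the $u_\epsilon$-approximation as the rigorous backbone and cite Theorem 1.5 of \cite{Ka} for the underlying integration identity, noting the differentiation argument as a heuristic cross-check.
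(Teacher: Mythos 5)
Your strategy is genuinely different from the paper's, which offers no argument at all: the corollary is simply ``deduced from Theorem 1.5, page 135, in \cite{Ka}.'' By contrast you build an honest approximation argument with the integrable regularizations $u_\epsilon(t)=e^{-\epsilon t}u(t)$, for which the computation $\mathcal{F}(f\ast u_\epsilon)(w)=\mathcal{F}(f)(w)/(iw+\epsilon)$ is correct and cleanly reduces everything to one limit interchange, and you correctly diagnose where the difficulty sits. The problem is that you stop exactly there, and the interchange is not an incidental technicality to be patched later by ``an integration-by-parts estimate'': it is the entire content of the corollary, and as the statement stands it cannot be justified. When $\mathcal{F}(f)(0)=\int_{\mathbb{R}}f\neq 0$, the function $(f\ast u)(t)=\int_{-\infty}^{t}f(x)\,dx$ tends to a nonzero constant $c$ as $t\to+\infty$, so it is not in $L^1(\mathbb{R})$ and $\int_{-\infty}^{\infty}e^{-iwt}(f\ast u)(t)\,dt$ is not even conditionally convergent: the tail behaves like $c\int_{0}^{T}e^{-iwt}\,dt=c(1-e^{-iwT})/(iw)$, which oscillates without converging. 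Your $u_\epsilon$ computation therefore produces only the Abel-regularized value $\mathcal{F}(f)(w)/(iw)$, which is not the Fourier transform as defined in Section 2, and no oscillation estimate for $w\neq 0$ can bridge that. The identity is meaningless without the additional hypothesis $f\ast u\in L^1(\mathbb{R})$ --- precisely the hypothesis carried by the cited Theorem 1.5 of \cite{Ka} and silently dropped in the paper's statement.

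Under that extra hypothesis your scheme does close, and it is worth recording how: writing $F=f\ast u$ and integrating by parts in $x$ (using $F'=f$ a.e.\ and the boundedness of $F$) gives $(f\ast u_\epsilon)(t)=F(t)-(F\ast\epsilon u_\epsilon)(t)$, hence $\mathcal{F}(f\ast u_\epsilon)(w)=\mathcal{F}(F)(w)\,iw/(iw+\epsilon)$; comparing with $\mathcal{F}(f)(w)/(iw+\epsilon)$ yields $\mathcal{F}(F)(w)=\mathcal{F}(f)(w)/(iw)$ for $w\neq 0$ with no limit needed at all. Your second route through Proposition 2.1(v) founders on the same missing hypothesis, as you yourself observe. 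In short: right idea, right diagnosis of the obstacle, but the decisive step is left open, and it can only be completed by importing the integrability assumption that the paper's statement omits.
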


\section{Hyers-Ulam stability of the linear differential equation}
%----------------------------------------------------------------------------------------------------------%
Before stating the main theorem we need the following important lemma:
\begin{prop}
Let $f \in L^1(\mathbb{R})$, $p$ be a polynomial with the complex roots $w_0, w_1, \ldots, w_{k-1}$, $k\geq 1$. Then there is a function $y_0 \in L^1(\mathbb{R})$ which is
infinitely differentiable in $\mathbb{R} \setminus \{0\}$, $(k-1)$-times differentiable at zero, and satisfying
\be
\mathcal{F}(y_0)(w)=\frac{\mathcal{F}(f)(w)}{p(w)} \quad (w \neq w_0, w_1, \ldots, w_{k-1}).
\ee
\end{prop}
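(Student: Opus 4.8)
The plan is to reduce the problem to the degree-one case by factoring $p$ and then realizing the reciprocal of each linear factor as an explicit Fourier transform. Writing $p(w)=c\prod_{j=0}^{k-1}(w-w_j)$ with $c\neq 0$, the decisive observation is that each factor $1/(w-w_j)$ is, up to a constant, the transform of a concrete $L^1$ function. Taking $z=-iw_j$ in Proposition 2.1(i) gives $\mathcal{F}(e^{-zt}u(t))(w)=1/(iw+z)=-i/(w-w_j)$, so that $\mathcal{F}(ie^{iw_j t}u(t))(w)=1/(w-w_j)$, valid whenever $\mathrm{Re}(z)=\mathrm{Im}(w_j)>0$. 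I would therefore let $E_j$ denote the corresponding building block for each root, set $G$ equal to the $k$-fold convolution $E_0\ast\cdots\ast E_{k-1}$ scaled by $1/c$, and define $y_0=f\ast G$. Since $G\in L^1(\mathbb{R})$ and $f\in L^1(\mathbb{R})$, the convolution satisfies $y_0\in L^1(\mathbb{R})$ with $\|y_0\|_1\le\|f\|_1\|G\|_1$, and Theorem 1.2(i) together with the multiplicativity of $\mathcal{F}$ over convolutions yields $\mathcal{F}(y_0)(w)=\mathcal{F}(f)(w)\mathcal{F}(G)(w)=\mathcal{F}(f)(w)/p(w)$ for every $w$ different from the roots, which is precisely the Fourier identity in the statement.

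For a root with $\mathrm{Im}(w_j)<0$ the formula above fails, since $e^{iw_j t}u(t)$ is then not integrable; here I would use the reflected step function $u(-t)$, for which a direct computation gives $\mathcal{F}(e^{zt}u(-t))(w)=1/(z-iw)$ when $\mathrm{Re}(z)>0$, and choose $z=iw_j$ so that this equals a constant multiple of $1/(w-w_j)$. A root $w_j$ of multiplicity $m$ is treated either by convolving the associated building block with itself $m$ times, which multiplies the transforms to give $1/(w-w_j)^m$, or directly through Proposition 2.1(iv), since differentiating $1/(w-w_j)$ in $w$ corresponds to multiplication by $(-it)^n$ on the time side; in either case the factor remains an explicit $L^1$ function that is $C^\infty$ off the origin.

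The regularity assertions would then follow from the structure of $G$ combined with Theorem 1.2(iii). Each $E_j$ is $C^\infty$ on $\mathbb{R}\setminus\{0\}$ with a single jump at the origin, so $G$ is $C^\infty$ away from $0$. Because convolving a bounded function against an $L^1$ function produces a uniformly continuous function, each successive convolution raises the order of regularity at the origin by one, so the $k$ factors deliver $(k-1)$-fold differentiability at $0$; one factor gives continuity ($k=1$), two give $C^1$ ($k=2$), and so on. Away from the origin I would differentiate under the convolution integral using the identity $(f\ast g)'=f\ast g'$ from Theorem 1.2(iii) applied to the smooth factor $G$, iterating to obtain infinite differentiability on $\mathbb{R}\setminus\{0\}$.

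The main obstacle I anticipate is the case of a real root $w_j\in\mathbb{R}$. Then $1/(w-w_j)$ is not the Fourier transform of any $L^1$ function, because the pole lies on the line of integration, and neither building block above is integrable. The natural substitute is Corollary 2.5, whose function $f\ast u$—translated to the pole by the modulation rule of Proposition 2.1(iii) via $f_1(t)=f(t)e^{-iw_jt}$—does satisfy the transform identity, but it lies in $L^1(\mathbb{R})$ only when $\mathcal{F}(f)(w_j)=0$, so that the singularity is removable. I therefore expect the delicate points to be, first, the standing hypothesis that the roots are genuinely non-real (so that the two half-plane formulas cover every linear factor by a decaying exponential) together with a separate removable-singularity argument on the real axis, and second, the precise bookkeeping of the order of differentiability at $0$.
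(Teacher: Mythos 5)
For roots off the real axis your construction is essentially the paper's: both use the kernels $ie^{iw_0t}u(t)$ (when $\mathrm{Im}(w_0)>0$) and $-ie^{iw_0t}u(-t)$ (when $\mathrm{Im}(w_0)<0$), whose transforms equal $1/(w-w_0)$ by Proposition 2.1(i)--(ii), and both handle a root of multiplicity $m$ either through Proposition 2.1(iv) or by repeated convolution. The one structural difference is how the linear factors are recombined: you convolve the $k$ building blocks into a single kernel $G$ with $\mathcal{F}(G)=1/p$ and set $y_0=f\ast G$, whereas the paper first takes the partial fraction decomposition $1/p(w)=\sum_{i,j}\lambda_{ij}/(w-w_i)^j$ and adds up the functions $y_{ij}$ attached to each term. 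Both are legitimate; your multiplicative route avoids computing the coefficients $\lambda_{ij}$, while the paper's additive route reduces everything to a self-contained one-root lemma.

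The genuine gap is the case of a real root, which the proposition does not exclude. You correctly diagnose the obstruction --- $1/(w-w_0)$ with $w_0\in\mathbb{R}$ is not the transform of any $L^1$ function, and $f\ast u$ need not be integrable --- but you then defer it to ``a separate removable-singularity argument'' that you never give, so as written your proof covers only polynomials with no real roots. The paper does commit to an argument here: it sets $f_0=f$ and iterates $f_{i+1}(t)=e^{iw_0t}\big[(f_ie^{-iw_0t})\ast u\big](t)$, so that Corollary 2.3 and Proposition 2.1(iii) give $\mathcal{F}(f_{i+1})(w)=\mathcal{F}(f_i)(w)/(i(w-w_0))$, and then takes $y_0=i^kf_k$. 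Your integrability worry is in fact well founded against this very step: $(g\ast u)(t)=\int_{-\infty}^tg$ tends to $\mathcal{F}(g)(0)$ as $t\to+\infty$, so $f_{i+1}\in L^1$ requires at least $\mathcal{F}(f_i)(w_0)=0$, a condition the paper never verifies. You have therefore put your finger on a real weakness of the published proof, but identifying a difficulty is not the same as resolving it, and the statement you were asked to prove includes real roots. A similar caution applies to your regularity bookkeeping: the smoothness of $f\ast G$ at a point $t\neq 0$ is governed by the global, not local, smoothness of $G$, since the convolution integral sees the kink of $G$ at the origin for every $t$; the paper is equally terse on this point, but for a general $f\in L^1$ it is not a routine step.
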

\begin{proof}
First assume that $p(w)=(w-w_0)^k$ and $Im(w_0) \neq 0$. Put $z_1:=-iw_0$ when $Im(w_0)>0$ and $z_2:=iw_0$ when $Im(w_0)<0$.
Since $Re(z_i)>0, i=1,2$, Proposition 2.1, part (i), implies that
$$
\mathcal{F}(e^{-z_1t}u(t))(w)=\frac{1}{iw+z_1}=\frac{1}{iw-iw_0}=\frac{1}{i(w-w_0)},
$$
and
$$
\mathcal{F}(e^{-z_2t}u(t))(w)=\frac{1}{iw+z_2}=\frac{1}{iw+iw_0}=\frac{1}{i(w+w_0)}.
$$
In the second case using part (ii) of Proposition 2.1 for $t_0=0$ and $k=-1$ to get
$$
\mathcal{F}(e^{z_2t}u(-t))(w)=\frac{1}{-iw+iw_0}=\frac{-1}{i(w-w_0)}.
$$
According the above computations the function $f_0:(-\infty, +\infty) \ri \mathbb{F}$ defined by

\[
f_0(t)= \left\{\begin{array}{ll}

ie^{iw_0t}u(t) & \mbox{if} \quad Im(w_0)>0\\

-ie^{iw_0t}u(-t)& \mbox{if} \quad Im(w_0)<0 \\
\end{array}\right.
\]
satisfies in the equation
$$
\mathcal{F}(f_0)(w)=\frac{1}{w-w_0}
$$
and by part (iv) of Proposition 2.1, we see that
$$
\mathcal{F}\Big((-it)^{k-1} f_0 \Big)(w)=\Big[\frac{1}{(w-w_0)}\Big]^{(k-1)}=\frac{(-1)^{k-1} (k-1)!}{(w-w_0)^{k}}.
$$
Now put
$$
y_{0}:=\Big(\frac{(-it)^{k-1}}{(k-1)!}f_0\Big) \ast f
$$
then $y_0 \in L^1(\mathbb{R})$, by Theorem 2.2 part (ii) $y_0$ is infinitely differentiable in $\mathbb{R} \setminus \{0\}$, $y^{(k-1)}_0(0)$ exists and
$$
\mathcal{F}(y_{0})(w)=\mathcal{F}\Big( \frac{(-it)^{k-1}}{(k-1)!}f \Big)(w) \mathcal{F}(y)(w)
=\frac{\mathcal{F}(y)(w)}{(w-w_0)^m}
$$
for $w \neq w_0$.
In the next case suppose that $Im(w_0)=0$, namely, $w_0$ is a real number.
In this case the argument is different. Indeed, let $f_0=f$ and
$$
f_{i+1}(t)=e^{iw_0 t} \big[(f_{i} e^{-iw_0 t}) \ast u(t) \big]
$$
for $i=0,1,2, \ldots k$. Then by Corollary 2.3, we have
$$
\mathcal{F}(f \ast u)(w)= \frac{\mathcal{F}(f)(w)}{iw} \quad( w \neq 0)
$$
and so by (iii) of Proposition 2.1:
$$
\mathcal{F}\Big(f(t)e^{-iw_0 t} \ast u \Big)(w)=\frac{\mathcal{F}(f(t)e^{-iw_0 t})(w)}{iw}=\frac{\mathcal{F}(y)(w+w_0)}{iw}.
$$
Hence
$$
\mathcal{F}(f_1)(w)=\mathcal{F}\Big(e^{iw_0 t} \big[(f(t) e^{-iw_0 t}) \ast u(t) \big]\Big)(w)
=\frac{\mathcal{F}(f)(w)}{i(w-w_0)}.
$$
Continuous in this way to get
$$
\mathcal{F}(f_{i+1})(w)=\frac{\mathcal{F}(f_i)(w)}{i(w-w_0)}.
$$
Therefore
$$
\mathcal{F}(f_{k})(w)=\frac{\mathcal{F}(w)(y)}{i^{k}(w-w_0)^k}.
$$
Finally, put $y_0:=i^{k} f_k$ then $y_0$ has the requested conditions and satisfies in (3.1) for $p(w)=(w-w_0)^k$ and in this case the proof is completed.

Now suppose that $p(w)$ expresses as a product of linear factors; $$p(w)=(w-w_1)^{k_1}(w-w_2)^{k_2} \ldots (w-w_k)^{n_k}$$
for some complex number $w_i, i=1, \ldots k$ and some integer $k_i, i=1, 2, \ldots, k$.
Applying the partial fraction decomposition of $\frac{1}{P(w)}$, we obtain:
$$\frac{1}{p(w)}=\sum_{i=1}^{k}\sum_{j=1}^{n_i}\frac{\la_{ij}}{(w-w_i)^{j}}$$
where $\la_{ij}$ is complex number for $i=1,2, \ldots ,k$ and $j=1,2, \ldots,n_i$.
Considering the first part of proof there exists a $y_{ij} \in L^1(\mathbb{R})$ which is
$k$-times differentiable on $\mathbb{R} \setminus \{0\}$ and
%-----%
\be
\mathcal{F}(y_{ij})(w)=\frac{\mathcal{F}(y)}{(w-w_i)^j} \quad (w \neq w_i).
\ee
%-----%
for every integer $1 \leq i \leq k$ and $1 \leq j \leq n_i$. Then
put
%-----%
\be
y_0(w)=\sum_{i=1}^{k}\sum_{j=1}^{n_i} \la_{ij}y_{ij}(w) \quad (w \neq w_1, w_2, \ldots, w_k)\no
\ee
%-----%
and use the linearity of Fourier transform and (3.2) to see
\begin{align}
\mathcal{F}(y_0)(w)=\mathcal{F} \bigg (\sum_{i=1}^{k}\sum_{j=1}^{n_i} \la_{ij}y_{ij}(w) \bigg)
= \sum_{i=1}^{k}\sum_{j=1}^{n_i} \la_{ij} \mathcal{F}(y_{ij})(w) \no \\
= \sum_{i=1}^{k}\sum_{j=1}^{n_i}\frac{\la_{ij} \mathcal{F}(y)}{(w-w_i)^{j}}=\frac{\mathcal{F}(y)(w)}{p(w)}. \no
\end{align}
\end{proof}

\begin{thm}
Consider the differential equation
\be
y'(t)+a_0 y(t)=f(t) \quad (t \in \mathbb{R}) \no
\ee
where $a_0$ is a nonzero scalar such that $Re(a_0) \neq 0$ and $f \in L^1(\mathbb{R})$.
Then there exists a constant $M$ with the following property:
For every $y \in L^1(\mathbb{R})$ and for given $\ep>0$ satisfying
\be
\|y'+a_{0} y -f\|_1 \leq \ep;\no
\ee
there exists a differentiable solution $y_a \in L^1(\mathbb{R})$ of equation such that $\big \|y_a-y \big \|_1 \leq M \ep$.
\end{thm}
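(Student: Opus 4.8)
The plan is to reduce the equation to a single convolution against an explicit $L^1$ kernel. Applying the Fourier transform and part (v) of Proposition 2.1, the operator $y \mapsto y' + a_0 y$ becomes multiplication by the degree-one polynomial $iw + a_0$, whose only root is $w_0 = i a_0$. Since $\mathrm{Im}(w_0) = \mathrm{Re}(a_0) \neq 0$, this root lies off the real axis, which is precisely the favourable case of Proposition 3.1. First I would produce a kernel $h \in L^1(\mathbb{R})$ with $\mathcal{F}(h)(w) = 1/(iw + a_0)$: when $\mathrm{Re}(a_0) > 0$, part (i) of Proposition 2.1 gives $h(t) = e^{-a_0 t} u(t)$ directly, and when $\mathrm{Re}(a_0) < 0$ a reflection (part (ii) with $k = -1$) gives $h(t) = -e^{-a_0 t} u(-t)$. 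In either case a direct integration yields the crucial norm bound $\norm{h}_1 = 1/\abs{\mathrm{Re}(a_0)}$.

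Next I would exhibit the solution. Setting $y_a := h \ast f$, Theorem 2.2(i) together with the inversion theorem gives $\mathcal{F}(y_a)(w) = \mathcal{F}(f)(w)/(iw + a_0)$, so $(iw + a_0)\mathcal{F}(y_a) = \mathcal{F}(f)$ and hence $y_a' + a_0 y_a = f$; that $y_a \in L^1(\mathbb{R})$ is differentiable follows from Theorem 2.2(iii) (equivalently, from Proposition 3.1 applied to $p(w) = iw + a_0$). This is the required solution, and it does not depend on $\ep$.

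For the estimate, write $g := y' + a_0 y - f$, so $\norm{g}_1 \le \ep$ by hypothesis. Then $z := y - y_a$ lies in $L^1(\mathbb{R})$ and satisfies $z' + a_0 z = g$; since $z' = g - a_0 z \in L^1(\mathbb{R})$, part (v) of Proposition 2.1 is applicable and yields $(iw + a_0)\mathcal{F}(z) = \mathcal{F}(g)$, that is $\mathcal{F}(z) = \mathcal{F}(h)\mathcal{F}(g) = \mathcal{F}(h \ast g)$. Injectivity of the Fourier transform on $L^1(\mathbb{R})$ then forces $z = h \ast g$ almost everywhere, whence
$$
\norm{y - y_a}_1 = \norm{h \ast g}_1 \le \norm{h}_1 \norm{g}_1 \le \frac{\ep}{\abs{\mathrm{Re}(a_0)}}.
$$
Thus $M = 1/\abs{\mathrm{Re}(a_0)}$ works.

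The main obstacle I anticipate is the step identifying $z$ with $h \ast g$, rather than merely observing that both solve the same equation. The general solution of $z' + a_0 z = g$ differs from the particular solution $h \ast g$ by a multiple of the homogeneous solution $e^{-a_0 t}$, and the hypothesis $\mathrm{Re}(a_0) \neq 0$ is exactly what guarantees $e^{-a_0 t} \notin L^1(\mathbb{R})$, so that the $L^1$ solution is unique and the spurious homogeneous term must vanish. This is the same non-real-root condition that made $h$ integrable in the first place, so the hypothesis is used twice and cannot be dropped. A secondary point to verify is the applicability of Proposition 2.1(v) to $z$, which is legitimate precisely because both $z$ and $z'$ are seen to belong to $L^1(\mathbb{R})$.
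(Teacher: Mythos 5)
Your proof is correct and follows essentially the same route as the paper: both invert the multiplier $iw+a_0$ by convolving with the kernel $e^{-a_0t}u(t)$ (or its reflection when $Re(a_0)<0$) and extract the constant $M=1/\abs{Re(a_0)}$ from the $L^1$ norm of that kernel. Your explicit appeal to injectivity of the Fourier transform to identify $y-y_a$ with $h\ast g$ is a point the paper passes over silently, but it is the same argument.
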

%------------------------------------------------------------------------------------%
\begin{proof}
Let
\be
h(t)=y'(t)+a_{0} y(t) -f(t). \no
\ee
Then
\be
\mathcal{F}(h)(w)&=&\mathcal{F}(y')(w)+a_0\mathcal{F}(y)(w)-\mathcal{F}(f)(w) \no \\
&=&(iw)\mathcal{F}(y)(w)+a_0\mathcal{F}(y)(w)-\mathcal{F}(f)(w) \no \\
&=&(iw+a_0)\mathcal{F}(y)(w)-\mathcal{F}(f)(w) \no
\ee
Hence
\be
\mathcal{F}(y)(w)-\frac{\mathcal{F}(f)(w)}{iw+a_0}=\frac{\mathcal{F}(h)(w)}{iw+a_0}.
\ee
By the preceding proposition there exists a function $y_a \in L^1(\mathbb{R})$ such that
$$
\mathcal{F}(y_a)(w)=\frac{\mathcal{F}(f)(w)}{iw+a_0}.
$$
Without loss of generality suppose that $Re(a_0)>0$ then by considering (3.3) and part (i) of Proposition 2.1, we obtain
\be
\mathcal{F}(y-y_a)(w)&=&\mathcal{F}(y)(w)-\mathcal{F}(y_a)(w) \no \\
&=&\mathcal{F}(y)(w)-\frac{\mathcal{F}(f)(w)}{iw+a_0} \no \\
&=&\frac{\mathcal{F}(h(t))(w)}{iw+a_0}=\mathcal{F}(e^{-a_{0}t} u(t) \ast h(t))(w). \no
\ee
Consequently $y(t)-y_a(t)=e^{-a_{0}t} u(t) \ast h(t)$ and
\be
\|y-y_a\|_1&=&\int_{-\infty}^{+\infty}|y(t)-y_a(t)| dt \no \\
&=& \int_{-\infty}^{+\infty} \Big |e^{-a_{0}t} u(t) \ast h(t)\Big | dt\no \\
&\leq& \int_{-\infty}^{+\infty} \Big |\int_{-\infty}^{+\infty} e^{-x a_0} u(x) h(t-x) dx \Big | dt \no \\
&\leq& \int_{-\infty}^{+\infty} \int_{-\infty}^{+\infty} |e^{-x a_0}|u(x)||h(t-x)|dx dt \no \\
&\leq& \int_{\infty}^{+\infty} \int_{0}^{+\infty} e^{-x Re(a_0)}|h(t-x)|dx dt\no \\
&\leq& \int_{0}^{+\infty} \int_{-\infty}^{+\infty} e^{-x Re(a_0)}|h(t-x)|dt dx \leq \frac{1}{Re(a_0)}\|h\|_1<\frac{\ep}{Re(a_0)}\no
\ee
This completes the proof.
\end{proof}

%------------------------------------------3.3------------------------------------------%
We denote by $C^n(\mathbb{R})$ the space of all $n$-times differentiable continuous functions on $\mathbb{R}$.
\begin{thm}
Consider the differential equation
\be
y^{(n)}(t)+\sum_{k=0}^{n-1}\al_{k} y^{(k)}(t)=f(t)
\ee
where $f \in L^1(\mathbb{R})$, $n>1$, $\al_0, \al_1, \ldots \al_{n-1}$ are given scalars such that $Re(a_{n-1})\neq 0$.
Then there exists a constant $M$ with the following property:
For every $y \in L^1(\mathbb{R})$ and for given $\ep>0$ satisfying
\be
\bigg \| y^{(n)}+\sum_{k=0}^{n-1}\al_{k} y^{(k)} -f \bigg \|_1 \leq \ep;
\ee
there exists a solution $y_a$ of the equation {\rm (3.4)} such that $y_a \in C^n(\mathbb{R}) \cap L^1(\mathbb{R})$ and
$$
\big \|y_a-y \big \|_1\leq M \ep \quad (t \in \mathbb{R}),
$$
\end{thm}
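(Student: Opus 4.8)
The plan is to follow the template of Theorem 3.2, replacing the single linear factor $iw+a_0$ by the full characteristic polynomial and inverting it with Proposition 3.1. First I would set $h(t)=y^{(n)}(t)+\sum_{k=0}^{n-1}\al_k y^{(k)}(t)-f(t)$, so that the hypothesis (3.5) reads $\|h\|_1\leq\ep$. Applying the Fourier transform and part (v) of Proposition 2.1 gives
$$\mathcal{F}(h)(w)=\Big[(iw)^n+\sum_{k=0}^{n-1}\al_k(iw)^k\Big]\mathcal{F}(y)(w)-\mathcal{F}(f)(w)=P(w)\mathcal{F}(y)(w)-\mathcal{F}(f)(w),$$
where $P(w):=(iw)^n+\sum_{k=0}^{n-1}\al_k(iw)^k$ has degree $n$. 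Rearranging yields the analogue of (3.3),
$$\mathcal{F}(y)(w)-\frac{\mathcal{F}(f)(w)}{P(w)}=\frac{\mathcal{F}(h)(w)}{P(w)}.$$

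Next I would construct the candidate solution. Applying Proposition 3.1 to $f$ produces $y_a\in L^1(\mathbb{R})$, infinitely differentiable off the origin, with $\mathcal{F}(y_a)(w)=\mathcal{F}(f)(w)/P(w)$. Reversing the computation above (again via Proposition 2.1 (v) together with Fourier inversion), the identity $P(w)\mathcal{F}(y_a)(w)=\mathcal{F}(f)(w)$ says precisely that $y_a$ solves (3.4). To place $y_a$ in $C^n(\mathbb{R})$ I would invoke the equation itself: Proposition 3.1 supplies derivatives up to order $n-1$, and then $y_a^{(n)}=f-\sum_{k=0}^{n-1}\al_k y_a^{(k)}$ recovers the top-order derivative from the lower ones.

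For the stability estimate I would apply Proposition 3.1 a second time, now to $h$, obtaining $g\in L^1(\mathbb{R})$ with $\mathcal{F}(g)(w)=\mathcal{F}(h)(w)/P(w)$; comparing with the rearranged identity gives $\mathcal{F}(y-y_a)=\mathcal{F}(g)$, hence $y-y_a=g$ almost everywhere by injectivity of the Fourier transform on $L^1$. The decisive point is that the construction inside Proposition 3.1 rests on the partial fraction decomposition $1/P(w)=\sum_{i}\sum_{j=1}^{n_i}\la_{ij}/(w-w_i)^j$, in which each summand corresponds to convolution of $h$ against an explicit kernel $\psi_{ij}\in L^1(\mathbb{R})$ (the functions of the form $\frac{(-it)^{j-1}}{(j-1)!}f_0$ appearing there). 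Thus $g=\sum_{i}\sum_{j=1}^{n_i}\la_{ij}(\psi_{ij}\ast h)$, and the convolution inequality $\|\psi_{ij}\ast h\|_1\leq\|\psi_{ij}\|_1\|h\|_1$ gives
$$\|y-y_a\|_1=\|g\|_1\leq\Big(\sum_{i}\sum_{j=1}^{n_i}|\la_{ij}|\,\|\psi_{ij}\|_1\Big)\|h\|_1=:M\|h\|_1\leq M\ep,$$
with $M$ depending only on the coefficients $\al_0,\ldots,\al_{n-1}$, exactly as required.

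The main obstacle is the integrability of the kernels $\psi_{ij}$, and this is where the assumption on the coefficients must enter. The kernel attached to a factor $(w-w_i)^{-j}$ lies in $L^1(\mathbb{R})$ precisely when $Im(w_i)\neq 0$, since on a half-line its modulus behaves like $|t|^{j-1}e^{-|Im(w_i)|\,|t|}$; for a real root the corresponding ``convolution with $u$'' construction of Proposition 3.1 yields a function that does not decay and fails to be integrable. Translating through $w_i=-i\la_i$, where the $\la_i$ are the roots of $\la^n+\sum_{k=0}^{n-1}\al_k\la^k$, the condition $Im(w_i)\neq 0$ amounts to no characteristic root being purely imaginary (a hyperbolicity condition), which is what the standing hypothesis on the coefficients is meant to guarantee; once it holds, the finiteness of $M$ and the estimate above follow at once. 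The secondary subtlety is the $C^n$ claim at the origin, which, as noted, I would settle through the differential equation rather than through Fourier inversion alone.
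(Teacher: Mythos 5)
Your overall architecture --- define $h$, pass to Fourier transforms, produce $y_a$ from Proposition 3.1 with $\mathcal{F}(y_a)=\mathcal{F}(f)/p(iw)$, write $y-y_a$ as a convolution of $h$ against a fixed kernel, and close with Young's inequality $\|k\ast h\|_1\le\|k\|_1\|h\|_1$ --- is exactly the paper's. The difference, and the place where your argument breaks, is the justification that the kernel is integrable. You reduce everything to the partial-fraction kernels $\psi_{ij}$ and correctly observe that the kernel attached to $(w-w_i)^{-j}$ is integrable precisely when $w_i$ lies off the real axis, i.e.\ when the corresponding characteristic root is not purely imaginary; you then assert that the hypothesis $Re(\al_{n-1})\neq 0$ guarantees this for \emph{every} root. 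It does not. Since the sum of the characteristic roots equals $-\al_{n-1}$, the hypothesis only guarantees that \emph{at least one} root has nonzero real part. For instance $p(\la)=(\la-1)(\la-i)=\la^2-(1+i)\la+i$ satisfies $Re(\al_1)=-1\neq 0$ yet has the purely imaginary root $\la=i$; by your own criterion the kernel for the factor coming from that root is not in $L^1(\mathbb{R})$, the constant $M=\sum_{i,j}|\la_{ij}|\,\|\psi_{ij}\|_1$ is infinite, and the estimate collapses. So your final step (``once it holds, the finiteness of $M$ \dots follow at once'') is a genuine gap, not a routine verification.

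The paper is aware that the hypothesis yields only one good root: it selects a single $w_j$ with $Re(w_j)\neq 0$, inverts the factor $(iw-w_j)^{-1}$ by Proposition 2.1(i) to obtain an honest $L^1$ kernel, and then feeds that kernel back into Proposition 3.1 against the remaining factor $q(w)=\prod_{k\neq j}(iw-w_k)$, concluding that $\mathcal{F}^{-1}(1/p(iw))\in L^1(\mathbb{R})$ and taking $M=\|\mathcal{F}^{-1}(1/p(iw))\|_1$. Whether that argument is itself airtight is a separate question (it leans on the real-root branch of Proposition 3.1, whose ``convolve with $u$'' construction does not obviously land in $L^1$), but it is a genuinely different mechanism from yours: it never requires all roots to avoid the imaginary axis. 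To salvage your version you would have to either strengthen the hypothesis to $Re(\la_i)\neq 0$ for all characteristic roots, or reorganize the estimate the way the paper does. Your handling of the $C^n$ claim via $y_a^{(n)}=f-\sum_{k}\al_k y_a^{(k)}$ is fine and in fact more explicit than the paper's.
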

%------------------------------------------------------------------------------------%
\begin{proof}
Let
\be
h(t)=y^{(n)}(t)+\sum_{k=0}^{n-1}\al_{k} y^{(k)}(t)-f(t).
\ee
Applying Proposition 2.1, part (v), we may derive
\be
\mathcal{F}(h)(w)&=&\mathcal{F}(y^{(n)})(w)+\sum_{k=0}^{n-1}\al_{k} \mathcal{F}(y^{(k)})(w)-\mathcal{F}(f)(w)  \\
&=&(iw)^{n}\mathcal{F}(y)+\sum_{k=0}^{n-1}\al_{k}(iw)^{k}\mathcal{F}(y)-\mathcal{F}(f)(w) \no \\
&=&p(iw)\mathcal{F}(y)(w)-\mathcal{F}(f)(w)\no
\ee
where $p$ is a complex polynomial determined by
\be
p(z)=z^n+\sum_{k=0}^{n-1}\al_{k}z^k
\ee
Applying the relations (3.6) and (3.7) for $h=0$ to find that
$y$ is a solution of (3.4) if and only if
\be
\mathcal{F}(y)(w)=\frac{\mathcal{F}(f)(w)}{p(iw)}.
\ee
Now from (3.7) we deduce that
\be
\frac{\mathcal{F}(h)(w)}{p(iw)}=\mathcal{F}(y)(w)-\frac{\mathcal{F}(f)(w)}{p(iw)} \quad (iw \neq w_0, w_1, \ldots, w_{n-1})
\ee
where $w_0, w_1, \ldots, w_{n-1}$ are the roots of the polynomial $p(w)$.
By Proposition 3.1, there exists a function $y_a \in L^1(\mathbb{R})$ such that
$$
\mathcal{F}(y_a)(w)=\frac{\mathcal{F}(f)(w)}{p(iw)}.
$$
Since every function satisfying (3.9) is a solution of (3.4), we find that $y_a$ is a solution of equation (3.4)
and from (3.10) we obtain
$$
\mathcal{F}(y)(w)-\mathcal{F}(y_a)(w)=\frac{\mathcal{F}(h)(w)}{p(iw)},
$$
and consequently
\be
|y(t)-y_a(t)|=\Big |\mathcal{F}^{-1}(\frac{\mathcal{F}(h)(w)}{p(iw)})(t)\Big |. \no
\ee
By the definition of $h$ and the inequality (3.5), $\|h\|_1\leq \ep$, so
\be
|\mathcal{F}(h)(w)| \leq \int_{-\infty}^{+\infty} |e^{-iwt}||h(t)| dt \leq \ep.
\ee
Now by considering the part (i) of Theorem 2.2, we have
\be
\|y-y_a\|_1&=&\int_{-\infty}^{+\infty}|y(t)-y_a(t)| dt \no \\
&=&\int_{-\infty}^{+\infty} \bigg |\mathcal{F}^{-1}(\frac{\mathcal{F}(h)(w)}{p(iw)})\bigg |dt \no \\
&=&\int_{-\infty}^{+\infty} \bigg |\mathcal{F}^{-1}(\mathcal{F}(h)(w))(t)
\ast \mathcal{F}^{-1}( \frac{1}{p(iw)})(t)\bigg |dt \no \\
&=&\int_{-\infty}^{+\infty} \bigg |(h \ast \mathcal{F}^{-1}(\frac{1}{p(iw)}))(t)\bigg |dt \no \\
&=& \Big \|h \ast \mathcal{F}^{-1}(\frac{1}{p(iw)})\Big \|_1 \no \\
&\leq& \big \|h \big \|_1 \Big\|\mathcal{F}^{-1}(\frac{1}{p(iw)}) \Big\|_1=M \ep \no
\ee
where
$$
M=\Big \|\mathcal{F}^{-1}(\frac{1}{p(iw)}) \Big\|_1<+\infty \quad (\mbox{ because } deg(p)>1).
$$
To see the last relation, looking (3.8) and the fact that $w_0, w_1, \ldots, w_{n-1}$ are the roots of polynomial $p$, we get
$$
w_0+w_1+\ldots w_{n-1}=-a_{n-1}
$$
and
$$
p(iw)=(iw-w_0)(iw-w_1)\ldots(iw-w_{n-1}).
$$
Since $Re(a_{n-1}) \neq 0$ there exists some $0 \leq j \leq n-1$ such that $Re(w_j) \neq 0$.
Then by Proposition 2.1, part (i), there exists some $y_j \in L^{1}(\mathbb{R})$ such that
$$
\mathcal{F}(y_j)(w)=\frac{1}{iw-w_j}
$$
Let $q(w)=\prod_{k \neq j}(iw-w_k)$. Then
$$
\frac{1}{p(iw)}=\frac{\frac{1}{iw-w_j}}{q(w)}=\frac{\mathcal{F}(y_j)(w)}{q(w)}.
$$
Since $y_j \in L^1(\mathbb{R})$ by Proposition 3.1 there exists some $y_j \in L^1(\mathbb{R})$ such that
$$
\mathcal{F}(y_j)(w)=\frac{\mathcal{F}(y_j)(w)}{q(w)}
$$
Comparing the above relations, we see that
$$
\mathcal{F}(y_j)(w)=\frac{1}{p(iw)}
$$
and hence
$$
\frac{1}{p(iw)}=\mathcal{F}^{-1}(y_j(w)) \in L^1(\mathbb{R}).
$$

Therefore (3.4) has Hyers-Ulam stablility and the proof is completed.
\end{proof}
The following example shows that the condition $Re(a_{n-1})\neq 0$ in the above theorem is necessary and can not be removed:
\begin{exa}
{\rm
Every nonzero solution of equation $y'(t)-iy(t)=0$ has the form $y=\la e^{it}$ which is not in $L^1(\mathbb{R})$.
Hence $y=0$ is the only solution of equation in $L^1(\mathbb{R})$. Now for $\ep>0$ consider the function
$$
y_{\ep}(t)=e^{(i-1)t}u(t)+\frac{\ep}{\sqrt 2} e^{-t}u(t)
$$
which is in $L^1(\mathbb{R})$. To see this note that
\be
\int_{-\infty}^{+\infty} |y_{\ep}(t)| dt&=&\int_{-\infty}^{+\infty} |e^{(i-1)t}u(t)+\frac{\ep}{\sqrt 2} e^{-t}u(t)|dt \no \\
&\leq&\int_{-\infty}^{+\infty} |e^{(i-1)t}u(t)| dt+\frac{\ep}{\sqrt 2}\int_{-\infty}^{+\infty}|e^{-t}u(t)|dt \no \\
&=& \int_{0}^{+\infty} |e^{it}||e^{-t}| dt+\frac{\ep}{\sqrt 2}\int_{0}^{+\infty}|e^{-t}|dt \no \\
&=& \int_{0}^{+\infty} e^{-t} dt+\frac{\ep}{\sqrt 2}\int_{0}^{+\infty}e^{-t}dt<+\infty \no
\ee
On the other hand, for $t < 0$, $y'_{\ep}(t)=y_{\ep}(t)=0$.
Thus
\be
\|y'_{\ep}-iy_{\ep}\|_1&=&\int_{-\infty}^{+\infty}|y'_{\ep}(t)-iy_{\ep}(t)| dt \no \\
&=&\int_{-\infty}^{+\infty}|ie^{it}-\frac{\ep}{\sqrt 2} e^{-t}u(t)-iMe^{it}-\frac{\ep}{\sqrt 2}ie^{-t}u(t)| \no \\
&=&\frac{\ep}{\sqrt 2} \int_{0}^{+\infty}|1+i|e^{-t} dt=\ep
\ee
If the equation $y'(t)-iy(t)=0$ has Hyers-Ulam stability, it must be a solution $y_a \in L^1(\mathbb{R})$ of the differential equation such that $\|y_{\ep}-y_a\|_1<K(\ep)$ and $\lim_{\ep \ri 0} K(\ep)=0$.
Since the only solution of equation in $L^1(\mathbb{R})$ is zero function, so $y_a=0$ and on the other hand
\be
\|y_{\ep}-y_a\|_1=\|y_{\ep}\|_1&=&\int_{0}^{+\infty}|e^{(i-1)t}+\frac{\ep}{\sqrt 2} e^{-t}| \no \\
&\geq& \int_{0}^{+\infty}|e^{(i-1)t}|dt-\frac{\ep}{\sqrt 2}\int_{0}^{+\infty}e^{-t} dt=1-\frac{\ep}{\sqrt 2}. \no
\ee
Therefore
$$
1-\frac{\ep}{\sqrt 2} \leq K(\ep)
$$
which is a contradiction with the fact that $\lim_{\ep \ri 0} K(\ep)=0$.
}
\end{exa}
%--------------------------------------------------------------------------------------------------%

\end{document}